\documentclass[11pt]{amsart}%
\usepackage{amssymb,amsmath,amsfonts,latexsym,amsthm,geometry,graphicx}
\usepackage[normalem]{ulem}
\usepackage{amsmath}
\usepackage{amsfonts}
\usepackage{color, soul}
\usepackage{amssymb}
\usepackage{tikz-cd}
\usepackage[all]{xy}%
\providecommand{\U}[1]{\protect\rule{.1in}{.1in}}
\theoremstyle{plain}
\newtheorem{theorem}{Theorem}[section]

\newtheorem{corollary}[theorem]{Corollary}

\theoremstyle{definition}
\newtheorem{definition}[theorem]{Definition}
\newtheorem{remark}[theorem]{Remark}

\numberwithin{equation}{section}

\begin{document}
	\title{A new criterion for the normalized Haar measure to be a Pietsch measure}
	\author[Alexandre Bispo]{Alexandre Bispo}
	\address{Departamento de Matem\'{a}tica \\
		Universidade Federal da Para\'{\i}ba \\
		58.051-900 - Jo\~{a}o Pessoa, Brazil.}
	\email{alexandre.cesar@academico.ufpb.br}
	\author[Renato Macedo]{Renato Macedo}
	\address{Centro de Ciências Exatas e Sociais Aplicadas \\
		Universidade Estadual da Para\'{\i}ba \\
		58.706-550 - Patos, Brazil.}
	\email{renato.burity@servidor.uepb.edu.br}
	\author[Joedson Santos]{Joedson Santos}
	\address{Departamento de Matem\'{a}tica \\
		Universidade Federal da Para\'{\i}ba \\
		58.051-900 - Jo\~{a}o Pessoa, Brazil.}
	\email{joedson.santos@academico.ufpb.br}

	\thanks{2020 Mathematics Subject Classification: Primary 28C10; Secondary 46T99, 47H99}
	\thanks{Alexandre Bispo is supported by Grant 2073/2023
		Paraíba State Research Foundation (FAPESQ)} 
	%\thanks{\textcolor{red}{Renato Macedo is partially supported by Capes}}
	\thanks{Joedson Santos is supported by CNPq Grant 406457/2023-9, Grant 403964/2024-5 and Grant 305655/2025-6} 
	\keywords{Haar measure, Pietsch measure}
	\maketitle
	
	\begin{abstract}
		In this paper we present a new criterion to determine when the normalized Haar measure on a compact topological group is a Pietsch measure for nonlinear summing mappings. As a consequence, we provide a partial answer to a problem raised by Botelho et al. in \cite{haar botelho} motivated by a question posed to the authors by J. Diestel. It is explicitly shown that this criterion encompasses recent and new results as particular cases.

	\end{abstract}
	
	\section{Introduction}
	
	One of the central results in the theory of absolutely summing operators is the celebrated Pietsch Domination Theorem, established by A. Pietsch in 1967 in his classical paper \cite{stu} (for more details see \cite[Theorem 2.12]{diestel}). This theorem provides a fundamental characterization of absolutely $p$-summing operators in terms of domination by an integral with respect to a probability measure. More precisely, a continuous linear operator between Banach spaces $u:E\to F$ is absolutely $p$-summing, $1\le p<\infty$, if and only if there exist a constant $C>0$ and a regular Borel probability measure $\nu$ on the closed unit ball of the dual of $E$ with the weak$^{*}$ topology, $(B_{E^{*}}\sigma(E^{*},E))$, such that
	\begin{equation}\label{op.ab.s}
		\|u(x)\|_{F} \leq C \left(\int\limits_{B_{E^{*}}} |\alpha(x)|^{p} \, d\nu(\alpha) \right)^{1/p},
	\end{equation}
	for all $x \in E$. Any probability measure satisfying \eqref{op.ab.s} is called a \emph{Pietsch measure} for $u$.
	
	A particularly important situation arises when $E=C(K)$, the Banach space of scalar-valued continuous functions on a compact Hausdorff space $K$. In this case, the Pietsch Domination Theorem asserts that a linear operator $u:C(K)\to F$ is absolutely $p$-summing if and only if there exist a constant $C>0$ and a regular Borel probability measure $\nu$ on $K$ such that
	\begin{equation}\label{op.ab.s1}
		\|u(T)\|_{F} \leq C \left(\int\limits_{K} |T(\varphi)|^{p} \, d\nu(\varphi) \right)^{1/p},
	\end{equation}
	for all $T \in C(K)$. In this framework, the measure $\nu$ is again referred to as a Pietsch measure for $u$.
	
	Despite the structural importance of Pietsch measures, relatively little is known in general about their explicit form. However, additional information becomes available when the underlying compact space carries a group structure. Indeed, when $K=G$ is a compact Hausdorff topological group, $H$ is a closed translation invariant subspace of $C(G)$, $F$ is a Banach space, and $u:H\to F$ is a translation invariant absolutely $p$-summing linear operator, it follows that the normalized Haar measure on $G$ is a Pietsch measure for $u$ (see \cite[p.~56]{diestel} and \cite[Theorem 4.2]{haar botelho}).

	Recall that a Haar measure on a compact topological group $G$ is a nonzero Radon measure $\mu$ satisfying the translation invariance property
	\[
	\mu(\psi B)=\mu(B)
	\]
	for every $\psi\in G$ and every Borel set $B\subseteq G$. A classical result due to A. Haar \cite{A.Haar} ensures that there exists a unique Haar measure satisfying the normalization condition $\mu(G)=1$. This measure will be denoted throughout the paper by $\sigma_G$ and is commonly referred to as the \emph{normalized Haar measure} on $G$ (see also \cite{ca,folland,Neu,weil}).

	The Pietsch Domination Theorem has had a profound impact on Banach space theory, serving as a bridge between the theory of summing operators and measure theory. Because of its fundamental role, several extensions and abstract versions have been developed over the years. In particular, an abstract Pietsch Domination Theorem was obtained in \cite{adv}, providing a unified framework that encompasses a wide variety of domination results for classes of mappings extending the ideal of absolutely $p$-summing operators (see also \cite{unified PDT,general PDT}). 
	
	Motivated by this abstract approach, J. Diestel asked whether the normalized Haar measure on a compact topological group could still play the role of a Pietsch measure in such a general setting (for more abstract approaches, see \cite{BC, BMPS,LSS,israel,tunisia}). This question was answered affirmatively in \cite[Theorem 3.1]{haar botelho}, where it was shown that the normalized Haar measure indeed serves as a Pietsch measure in several abstract situations. As a consequence, the result applies to a number of important classes of summing operators, both linear and nonlinear, defined on closed translation invariant subspaces of $C(G)$. Nevertheless, certain relevant cases remain unresolved. In particular, as pointed out in \cite[Open Problem]{haar botelho}, the argument developed there does not seem to apply to the class of Lipschitz $p$-summing mappings. This leads naturally to the following open problem.
	
	\medskip
	
	\noindent \textbf{Open Problem.} \textit{Let $G$ be a compact Hausdorff topological group, $H$ be a closed translation invariant subspace of $C(G)$, $Y$ be a metric space and $u:H \to Y$ be a translation invariant Lipschitz $p$-summing mapping. Is the normalized Haar measure $\sigma_G$ a Pietsch measure for $u$?}
	
	\medskip
	
	The main goal of this paper is to provide a new criterion ensuring that the normalized Haar measure is a Pietsch measure in a broad class of situations. Our approach refines previous techniques and yields conditions that are applicable in settings where earlier results do not apply directly.
	
	The article is organized as follows. In Section \ref{sec notations} we recall the notation and preliminary results that will be used throughout the paper. In Section \ref{sec main result} we introduce the notion of a $D$-dominating measure on a compact Hausdorff topological group $G$, where $D:G\to [0,\infty)$ is a continuous function. This notion plays a central role in our approach and provides a useful criterion to determine when the normalized Haar measure is a Pietsch measure for several classes of (linear and nonlinear) summing operators. In Section \ref{OP} we present a partial solution to the open problem stated above. Finally, in Section 5 we introduce the class of Lipschitz $p$-semi-integral multilinear operators, establish a Pietsch-type domination theorem for this class, and prove that the normalized Haar measure is a Pietsch measure in this setting. We also show that our main result recovers, as a particular case, a result obtained in \cite[Corollary 5.2]{mastylo}, where it was proved that the normalized Haar measure is a Pietsch measure for the class of multilinear $\Phi$-summing operators, with $\Phi:[0,\infty)\to[0,\infty)$ being an Orlicz function.
	
	\section{Notations and background}\label{sec notations}
	
	To describe our aims, we introduce the notation used in this paper. Let $K$ be a compact Hausdorff topological space and let $C(K)$ be the Banach space of all continuous mappings from $K$ into $\mathbb{K}$ ($\mathbb{K} = \mathbb{R}$ or $\mathbb{C}$), equipped with the sup norm given by
	\begin{equation*}
		\|f\|_{C(K)} := \sup\limits_{\varphi \in K} |f(\varphi)|.
	\end{equation*}
	When $K = G$ is a compact Hausdorff topological group we use the usual multiplicative notation for the operation on $G$. For any $\psi \in G$, we define the left translation homeomorphism $I_{\psi}: G \to G$ by
	\begin{equation*}
		I_{\psi}(\varphi):= \psi\varphi, \hbox{for all $\varphi \in G$}.
	\end{equation*}
	Note that $I_{\psi} \circ I_{\xi} = I_{\psi\xi}$, for all $\psi,\xi \in G$. Hence, denoting by $e \in G$ its neutral element, one has $I_{e} = Id_{G}$ and consequently $I_{\psi}^{-1} = I_{\psi^{-1}}$, for all $\psi \in G$.
	
	%\begin{remark}
	Let $m \in \mathbb{N}$, $G_{1},\dots,G_{m}$ be compact Hausdorff topological groups and $e_{i}$ be the neutral element of $G_{i}$, for every $i \in \{1,\dots,m\}$. Consider $G:= G_{1} \times \dots \times G_{m}$ the compact Hausdorff topological group with neutral element $e=(e_{1},\dots,e_{m})$, with the usual coordinatewise operation and equipped with the product topology. Given $\psi=(\psi_{1},\dots,\psi_{m}) \in G$, the translation homeomorphism $I_{\psi}: G \to G$ is defined by
	\begin{equation*}
		I_{\psi}(\varphi):= (I_{\psi_{1}}(\varphi_{1}),\dots,I_{\psi_{m}}(\varphi_{m})) = (\psi_{1}\varphi_{1},\dots,\psi_{m}\varphi_{m}), \hbox{ for all $\varphi=(\varphi_{1},\dots\varphi_{m}) \in G$}.
	\end{equation*}
	%\end{remark}
	
	\begin{definition}\label{def translation invariant}
		Let $m \in \mathbb{N}$, $G_{1},\dots,G_{m}, G$ be compact Hausdorff topological groups and $A$ be an arbitrary set.
		\begin{itemize}
			\item[(i)] A non-void subset $H$ of $C(G)$ is called \textit{translation invariant} if for all $\psi \in G$ and all $T \in H$, we have $T \circ I_{\psi} \in H$.
			
			\item[(ii)] Let $H$ be a translation invariant subset of $C(G)$. A map $u: H \to A$ is called \textit{translation invariant} if, for all $\psi \in G$ and all $T \in H$, we have $u(T \circ I_{\psi}) = u(T)$.
			
			\item[(iii)] Let $H_{i}$ be a translation invariant subset of $C(G_{i})$, with $i \in \{1,\dots,m\}$. A map $u: H_{1} \times \dots \times H_{m} \to A$ is called \textit{translation $m$-invariant} if, for all $(T_{1},\dots,T_{m}) \in H_{1} \times \dots \times H_{m}$ and all $(\psi_{1},\dots,\psi_{m}) \in G_{1} \times \dots \times G_{m}$, we have $	u(T_{1} \circ I_{\psi_{1}},\dots,T_{m} \circ I_{\psi_{m}}) = u(T_{1},\dots,T_{m})$.

		\end{itemize}
	\end{definition}

	Now, let us list some facts on measure theory that will be useful throughout this paper.
	
	\begin{remark}$($see \cite[p. 304]{bogachev v2} and \cite[p. 33]{weil} $)$\label{remark haar}
		Let $G$ be a compact Hausdorff topological group and $\mu$ be a Haar measure on $G$. Then, for all non-negative measurable function $f: G \to \mathbb{R}$ and for all $\varphi \in G$, we have
		\begin{equation*}
			\int\limits_{G} f(\varphi\psi) \, d\mu(\psi) = \int\limits_{G} f(\psi) \, d\mu(\psi).
		\end{equation*}		
	\end{remark}
	
	\begin{definition}
		Let $(M,\mu)$ be a measure space, $N$ be a measurable space and $I: M \to N$ be a measurable map. The \textit{image measure} of $\mu$ via $I$ is the measure on $N$, denoted by $I\mu$, given by
		\begin{equation*}
			I\mu(B):= \mu(I^{-1}(B)), \hbox{ for all measurable subset $B \subseteq N$}.
		\end{equation*}
	\end{definition}
	
	\begin{remark}(see \cite[Theorem 3.6.1]{bogachev v1} and \cite[Theorem 9.1.1.(i)]{bogachev v2})\label{remark image measure}
		Let $(M,\mu)$ be a measure space, $N$ be a measurable space and $I:M \to N$ be a measurable map.
		\begin{itemize}
			\item[(i)] If $g: N \to \mathbb{R}$ is a non-negative function such that $g \circ I: M \to \mathbb{R}$ is an integrable function on $M$, then $g$ is integrable on $N$ and
			\begin{equation*}
				\int\limits_{N} g(y) \, d(I\mu)(y) = \int\limits_{M} g \circ I(x) \, d\mu(x).
			\end{equation*}
			
			\item[(ii)] If $M$ and $N$ are Hausdorff spaces, $I$ is a continuous map and $\mu$ is a Radon measure, then $I\mu$ is also a Radon measure.  
			
			\item[(iii)] If $M=N=G$ is a compact Hausdorff topological group, $\mu$ is a Radon measure and $I=I_{\varphi}$ is the left translation homeomorphism, then $\mu$ is a Haar measure if and only if $(I_{\varphi})\mu = \mu$ for all $\varphi \in G$.
		\end{itemize}
	\end{remark}

	\section{Main Results}\label{sec main result}
	
	In this section we present our main result that will be used to provide a positive partial answer to the Open Problem stated in the introduction. The key point is to show that the normalized Haar measure on $G$ satisfies a suitable integral domination property.

	\begin{definition}
		Let $G$ be a compact Hausdorff topological group, $D: G \to [0,\infty)$ be a continuous function and $\nu$ be a regular Borel probability measure on $G$. We say that $\nu$ is \textit{$D$-dominating} if 
		\begin{equation*}
			1 \leq \int\limits_{G} D(\psi\varphi) \, d\nu(\varphi),\ \hbox{ for all $\psi \in G$.}.
		\end{equation*}
	\end{definition}
	
	\begin{theorem}\label{main theorem}
		Let $G$ be a compact Hausdorff topological group and $D: G \to [0,\infty)$ be a continuous function. If there is a regular Borel probability measure $\nu$ on $G$ such that $\nu$ is $D$-dominating, then $\sigma_{G}$ is $D$-dominating.
	\end{theorem}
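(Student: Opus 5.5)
We want to show that $\int_G D(\psi\varphi)\,d\sigma_G(\varphi)\ge 1$ for every $\psi\in G$. The plan is to average the $D$-dominating inequality for $\nu$ over $G$ with respect to $\sigma_G$, then exchange the order of integration by Fubini. Haar invariance (Remark \ref{remark haar}) will then turn the inner integral into one that no longer depends on the variable of $\nu$.

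First, note that by Remark \ref{remark haar} the quantity $J:=\int_G D(\psi\varphi)\,d\sigma_G(\varphi)$ does not depend on $\psi$: it equals $\int_G D(\varphi)\,d\sigma_G(\varphi)$. So it suffices to prove $\int_G D\,d\sigma_G\ge 1$. Integrating the hypothesis $1\le \int_G D(\psi\varphi)\,d\nu(\varphi)$ against $d\sigma_G(\psi)$, and using $\sigma_G(G)=1$, gives
\begin{equation*}
1\le \int_G\int_G D(\psi\varphi)\,d\nu(\varphi)\,d\sigma_G(\psi).
\end{equation*}

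Next we want to swap the integrals. The map $(\psi,\varphi)\mapsto D(\psi\varphi)$ is continuous, non-negative and bounded on the compact space $G\times G$, because multiplication is continuous and $D$ is continuous. Both measures are finite Radon measures. For non-negative continuous functions on a product of compact Hausdorff spaces, Fubini's theorem for Radon measures applies: both iterated integrals equal the integral of the function against the Radon product measure. This is the one genuinely technical point, since in a non-metrizable $G$ the Borel $\sigma$-algebra of $G\times G$ may exceed the product $\sigma$-algebra. Continuity of the integrand handles it. Alternatively, one can argue directly: approximate $D(\psi\varphi)$ uniformly on $G\times G$ by finite sums $\sum_i f_i(\psi)g_i(\varphi)$ with $f_i,g_i\in C(G)$ (Stone--Weierstrass), for which the swap is trivial, and then pass to the limit.

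After the swap, the right-hand side becomes $\int_G\left(\int_G D(\psi\varphi)\,d\sigma_G(\psi)\right)d\nu(\varphi)$. The inner integral is a right translate, so we need right invariance of $\sigma_G$: $\int_G f(\psi\varphi)\,d\sigma_G(\psi)=\int_G f\,d\sigma_G$. The paper's Remark \ref{remark haar} states only left invariance. On a compact group, however, the modular function is trivial, since it is a continuous homomorphism into $(0,\infty)$ with compact image, hence identically $1$. Consequently $\sigma_G$ is also right invariant. Concretely, for fixed $\varphi$ the measure $B\mapsto\sigma_G(B\varphi)$ is a left-invariant probability Radon measure, so by uniqueness it equals $\sigma_G$.

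With right invariance in hand, the inner integral equals $\int_G D\,d\sigma_G$, independently of $\varphi$. Since $\nu(G)=1$, we obtain $1\le\int_G D\,d\sigma_G = J$, which is the claim. The main obstacle is the justification of Fubini together with right invariance; I would settle both via the continuity and Stone--Weierstrass argument and the uniqueness of the normalized Haar measure.
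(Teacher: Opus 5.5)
Your proof is correct. It has the same skeleton as the paper's: integrate the $D$-dominating inequality against $d\sigma_G(\psi)$, apply Fubini, and finish with uniqueness of the normalized Haar measure. The difference is where uniqueness enters.

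The paper does not swap the iterated integrals. Instead it rewrites the double integral as $\int_G D\,d\mu$, where $\mu=I(\sigma_G\times\nu)$ is the image of the product measure under multiplication $I(\psi,\varphi)=\psi\varphi$, i.e.\ the convolution $\sigma_G*\nu$. It then checks, using only the left invariance of Remark~\ref{remark haar} in the first variable, that $\mu$ is a left-invariant Radon probability measure, so $\mu=\sigma_G$.

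You swap the integrals, and therefore need right invariance of $\sigma_G$ in the inner integral, which you correctly obtain from uniqueness (or unimodularity). Right invariance is exactly the paper's identity $\sigma_G*\nu=\sigma_G$ in the special case $\nu=\delta_\varphi$. Your uniqueness argument for $B\mapsto\sigma_G(B\varphi)$ is the paper's argument specialized to a Dirac mass.

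Your route buys a cleaner measure-theoretic footing. You only need Fubini for a continuous integrand, which your Stone--Weierstrass approximation settles elementarily. The paper applies the section formula $(\sigma_G\times\nu)(I^{-1}(B))=\int_G\nu(I^{-1}(B)_\psi)\,d\sigma_G(\psi)$ to arbitrary Borel subsets of $G\times G$. For non-metrizable $G$ this tacitly requires the Radon product measure and Fubini for $\tau$-additive measures.

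The paper's route, in turn, yields the stronger structural fact $\sigma_G*\nu=\sigma_G$ for every Radon probability $\nu$. It also never steps outside the left-invariance framework set up in Section~\ref{sec notations}.
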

	
	\begin{proof}
		Let $I: G \times G \to G$ be the continuous map defined by
		\begin{equation*}
			I(\psi,\varphi):= I_{\psi}(\varphi) = \psi\varphi, \hbox{ for all $(\psi,\varphi) \in G \times G$}.
		\end{equation*}
		Since $\nu$ is $D$-dominating, then for all $\psi \in G$, we have
		\begin{equation}\label{proof sub}
			1 \leq \int\limits_{G} D(\psi\varphi) \, d\nu(\varphi) = \int\limits_{G} D \circ I(\psi,\varphi) \, d\nu(\varphi).
		\end{equation}
		As the map $D \circ I: G \times G \to [0,\infty)$ is continuous, it follows that $D \circ I$ is integrable. Hence, by Fubini's Theorem, the function
		\begin{equation*}
			\psi \in G \mapsto \int\limits_{G} D \circ I(\psi,\varphi) \, d\nu(\varphi)
		\end{equation*}
		is Borel measurable and
		\begin{equation}\label{proof fubini}
			\int\limits_{G} \int\limits_{G} D \circ I(\psi,\varphi) \, d\nu(\varphi) \, d\sigma_{G}(\psi) = \int\limits_{G \times G} D \circ I(\psi,\varphi) \, d(\sigma_{G} \times \nu)(\psi,\varphi).
		\end{equation}
		Considering $\mu:= I(\sigma_{G} \times \nu)$ the image measure on $G$ of $\sigma_{G} \times \nu$ via $I$, it follows from \eqref{proof sub}, \eqref{proof fubini} and Remark \ref{remark image measure}(i) that
		\begin{equation}\label{blabla}
			1 \leq \int\limits_{G \times G} D \circ I(\psi,\varphi) \, d(\sigma_{G} \times \nu)(\psi,\varphi) = \int\limits_{G} D(\varphi) \, dI(\sigma_{G} \times \nu)(\varphi) = \int\limits_{G} D(\varphi) \, d\mu(\varphi).
		\end{equation}		
		
		Now we will show that $\mu = \sigma_{G}$. Indeed, by Remark \ref{remark image measure}(ii), $\mu$ is a Radon measure. Furthermore, for every Borel subset $B \subseteq G$ and every $\psi \in G$, we get
		\begin{equation}\label{proof mu}
			I^{-1}(B)_{\psi} := \{\varphi \in G: (\psi,\varphi) \in I^{-1}(B)\} = \{\varphi \in G: I_{\psi}(\varphi) = I(\psi,\varphi) \in B\} = (I_{\psi})^{-1}(B).
		\end{equation}
		Using the product measure, we obtain
		\begin{equation}\label{proof mu(B)}
			\begin{aligned}
				\mu(B) & = I(\sigma_{G} \times \nu)(B) = (\sigma_{G} \times \nu)(I^{-1}(B)) \\
				& = \int\limits_{G} \nu(I^{-1}(B)_{\psi}) \, d\sigma_{G}(\psi) = \int\limits_{G} \nu((I_{\psi})^{-1}(B))) \, d\sigma_{G}(\psi).
			\end{aligned}
		\end{equation}
		For every $\varphi \in G$, by \eqref{proof mu} and \eqref{proof mu(B)}, we have
		\begin{equation}\label{proof conta 1}
			\begin{aligned}
				(I_{\varphi})\mu(B) & = \mu((I_{\varphi})^{-1}(B)) = \int\limits_{G} \nu((I_{\psi})^{-1}((I_{\varphi})^{-1}(B)))) \, d\sigma_{G}(\psi) \\
				& = \int\limits_{G} \nu(I_{\psi^{-1}}(I_{\varphi^{-1}}(B))) \, d\sigma_{G}(\psi) = \int\limits_{G} \nu(I_{\psi^{-1}\varphi^{-1}}(B)) \, d\sigma_{G}(\psi) \\
				& = \int\limits_{G} \nu(I_{(\varphi\psi)^{-1}}(B)) \, d\sigma_{G}(\psi) = \int\limits_{G} \nu((I_{\varphi\psi})^{-1}(B)) \, d\sigma_{G}(\psi) \\
				& = \int\limits_{G} \nu(I^{-1}(B)_{\varphi\psi}) \, d\sigma_{G}(\psi).
			\end{aligned}
		\end{equation}
		Of course the function $\psi \in G \mapsto \nu(I^{-1}(B)_{\psi})$ is non-negative and measurable on $G$, thus the same is true for the function $\psi \in G \mapsto \nu(I^{-1}(B)_{\varphi\psi})$. Then, by Remark \ref{remark haar}, \eqref{proof mu}, \eqref{proof mu(B)} and \eqref{proof conta 1}, we have
		\begin{equation*}\label{proof conta 2}
			(I_{\varphi})\mu(B)=\int\limits_{G} \nu(I^{-1}(B)_{\varphi\psi}) \, d\sigma_{G}(\psi) = \int\limits_{G} \nu(I^{-1}(B)_{\psi}) \, d\sigma_{G}(\psi) = \int\limits_{G} \nu((I_{\psi})^{-1}(B)) \, d\sigma_{G}(\psi) = \mu(B).
		\end{equation*}
		Thus, by Remark \ref{remark image measure}(iii) it follows that $\mu$ is a Haar measure. Moreover
		\begin{equation*}
			\mu(G) = I(\sigma_{G} \times \nu)(G) = (\sigma_{G} \times \nu)(I^{-1}(G)) = (\sigma_{G} \times \nu)(G \times G) = \sigma_{G}(G)\nu(G) = 1.
		\end{equation*}
		Therefore $\mu = \sigma_{G}$. Finally, we can use (\ref{blabla}) and Remark \ref{remark haar} to obtain
		\begin{equation*}
			1 \leq \int\limits_{G} D(\varphi) \, d\mu(\varphi) = \int\limits_{G} D(\varphi) \, d\sigma_{G}(\varphi) = \int\limits_{G} D(\psi\varphi) \, d\sigma_{G}(\varphi),
		\end{equation*}
		what concludes the proof.
	\end{proof}

	\section{A partial answer to Open Problem}\label{OP}
	
	Let $X$ be a pointed metric space with base point $0_{X} \in X$. We denote by $X^{\#}$ its \textit{Lipschitz dual space}, that is, $X^{\#}$ is the Banach space of all Lipschitz functions $f: X \to \mathbb{K}$ such that $f(0_{X}) = 0$, endowed with the norm
	\begin{equation*}
		\|f\|_{X^{\#}}:= \sup\limits_{^{x,q \in X}_{x \neq q}} \dfrac{|f(x)-f(q)|}{d_{X}(x,q)}.
	\end{equation*}

	Inspired by the remarkable work \cite{Far.Joh.}, Mastylo and Pérez \cite{mastyloperez} introduced an interesting variant of nonlinear Lipschitz summing maps between metric spaces. Let $W\subseteq B_{X^{\#}}$ be a compact subset with respect to the topology of the pointwise convergence on $X$ and let $1 \leq p < \infty$. A Lipschitz mapping $u: X \to Y$ from a pointed metric space $X$ to a metric space $Y$ is called \textit{Lipschitz $p$-$W$-summing} if there is a constant $C>0$ such that, for all $n \in \mathbb{N}$ and $x_{1},\dots,x_{n},q_{1},\dots,q_{n} \in X$, we have
	\begin{equation}\label{lipschitz}
		\left(\sum\limits_{j=1}^{n} d_{Y}(u(x_{j}),u(q_{j}))^{p}\right)^{1/p} \leq C \sup\limits_{f\in W} \left(\sum\limits_{j=1}^{n} |f(x_{j})-f(q_{j})|^{p} \right)^{1/p}.
	\end{equation}
	The infimum of all constants that satisfies \eqref{lipschitz} is denoted by $\pi_{p}^{W}(u)$ and the set of all Lipschitz $p$-$W$-summing mappings from $X$ to $Y$ is denoted by $\Pi_{p}^{W}(X;Y)$ (in particular, when $X$ is a Banach space and $W=B_{X^{*}}$ we use the notation $\pi_{p}^{*}(u)$ and $\Pi_{p}^{*}(X;Y)$). This definition unifies some concepts of classes of summing operators. Let $X, Y$ be metric spaces and $E, F$ be Banach spaces. Then
	
	\begin{itemize}
		\item  a linear operator $u : E \to F$ is Lipschitz $p$-$B_{E^{*}}$-summing if and only if it is $p$-summing in the classical sense.
		
		\item  a Lipschitz operator $u : E \to F$ is Lipschitz $p$-$B_{E^{*}}$-summing if and only if it is Lipschitz $p$-dominated (see \cite[Theorem 3.2]{ChenZeheng}).
		
		\item  a Lipschitz operator $u : X \to Y$ is Lipschitz $p$-$B_{X^{\#}}$-summing if and only if it is Lipschitz $p$-summing (see \cite{Far.Joh.}).
	\end{itemize}
	
	We now focus on the case where $u: E \to Y$ is a Lipschitz operator and $W=B_{E^{*}}$. Our first result proves that these operators satisfy a Pietsch-type domination theorem.

	\begin{theorem} For a Lipschitz mapping $u: E \to Y$, the following statements are equivalent:
		
		(i) $u\in \Pi_{p}^{*}(E;Y)$;
		
		(ii) there is a constant $C>0$ such that, for all $n \in \mathbb{N}$ and $x_{1},\dots,x_{n},q_{1},\dots,q_{n} \in E$, we have
		\begin{equation*}\label{lipschitz dominated}
			\left(\sum\limits_{j=1}^{n} d_{Y}(u(x_{j}),u(q_{j}))^{p}\right)^{1/p} \leq C \sup\limits_{\alpha \in B_{E^{*}}} \left(\sum\limits_{j=1}^{n} |\alpha(x_{j} - q_{j})|^{p} \right)^{1/p};
		\end{equation*}
		
		(iii) there are a constant $C>0$ and a regular Borel probability measure $\nu$ on $B_{E^{*}}$ such that, for all $x,q \in E$, we have
		\begin{equation*}\label{lipschitz domination 2}
			d_{Y}(u(x),u(q)) \leq C \left(\int\limits_{B_{E^{*}}} |\alpha(x-q)|^{p} \, d\nu(\varphi) \right)^{1/p}.
		\end{equation*}
		In any case $\pi_{p}^{*}(u)=\inf\{{C:\ C\ \text{as in}\ (ii)}\}=\inf\{{C:\ C\ \text{as in}\ (iii)}\}$.
		
	\end{theorem}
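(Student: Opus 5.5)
The plan is to prove (i)$\Leftrightarrow$(ii) directly from the definitions, and (ii)$\Leftrightarrow$(iii) via a Pietsch-type domination argument, either through the abstract Pietsch Domination Theorem of \cite{adv} or through the usual Ky Fan/Hahn--Banach minimax argument.

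\emph{(i)$\Leftrightarrow$(ii).} This is just a reformulation. Take $W=B_{E^{*}}$ in \eqref{lipschitz}; it is weak$^{*}$ compact and lies in $B_{E^{\#}}$ with base point $0$. Each $\alpha\in B_{E^{*}}$ is linear, so $\alpha(x_j)-\alpha(q_j)=\alpha(x_j-q_j)$. Hence the two suprema coincide, the admissible constants coincide, and $\pi_p^{*}(u)$ equals the infimum of the constants in (ii).

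\emph{(iii)$\Rightarrow$(ii).} Raise the domination in (iii) to the power $p$ for each pair $(x_j,q_j)$ and sum over $j$. This gives
\begin{equation*}
\sum_{j=1}^{n} d_Y(u(x_j),u(q_j))^p\le C^p\int_{B_{E^{*}}}\sum_{j=1}^{n}|\alpha(x_j-q_j)|^p\,d\nu(\alpha)\le C^p\sup_{\alpha\in B_{E^{*}}}\sum_{j=1}^{n}|\alpha(x_j-q_j)|^p,
\end{equation*}
where the last step uses that $\nu$ is a probability measure. So the same constant $C$ works in (ii).

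\emph{(ii)$\Rightarrow$(iii).} This is the main step, and I will do it with the abstract Pietsch Domination Theorem of \cite{adv}, taking $K=B_{E^{*}}$ with the weak$^{*}$ topology (compact Hausdorff). Set $X_1=E\times E$ and $G_1=\mathbb{K}$, and define
\begin{equation*}
R(\alpha,(x,q),b)=|\alpha(x-q)|,\qquad S(u,(x,q),b)=d_Y(u(x),u(q)).
\end{equation*}
For fixed $(x,q)$ the map $\alpha\mapsto|\alpha(x-q)|$ is weak$^{*}$ continuous, which is the hypothesis of \cite{adv}. Condition (ii) is then exactly the $R$-$S$-abstract $p$-summing inequality, and the theorem yields a probability measure $\nu$ with the domination in (iii) and the same constant.

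If I instead argue directly, I would use the standard Ky Fan argument. Consider the convex set of functions on $K$
\begin{equation*}
f(\alpha)=\sum_{j}\Bigl(d_Y(u(x_j),u(q_j))^p-C^p|\alpha(x_j-q_j)|^p\Bigr).
\end{equation*}
By (ii), each such $f$ satisfies $\min_K f\le 0$, because a continuous function on a compact set attains its supremum. The set is closed under sums, since concatenating lists corresponds to adding the functions, and under positive scalars, by repeating or rescaling (handle rationals and then continuity, or note that real convex combinations are covered by Ky Fan's lemma applied to concave families). Ky Fan's lemma then gives a probability measure $\nu$ with $\int f\,d\nu\le 0$ for all such $f$. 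Specializing to a single pair $(x,q)$ gives (iii).

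The main obstacle is purely technical: justifying that the family is concave in the sense required by Ky Fan. The $p$-th powers make sums natural, and scalar multiples are obtained from the convex-combination form of the lemma. Since the infimal constants agree in each implication, the norm equalities follow.
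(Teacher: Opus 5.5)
Your overall plan is the paper's: (i)$\Leftrightarrow$(ii) by unwinding the definition with $W=B_{E^{*}}$, (iii)$\Rightarrow$(ii) by integrating, and (ii)$\Rightarrow$(iii) by an abstract Pietsch Domination Theorem. The first two parts are fine.

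The gap is in how you instantiate the abstract theorem. Besides continuity of $\alpha\mapsto R(\alpha,x,b)$, the theorems of \cite{unified PDT,general PDT,adv} require a homogeneity condition in the scalar variable $b$. In its weakest form this reads $R(\varphi,x,\eta b)\le\eta R(\varphi,x,b)$ and $S(f,x,\eta b)\ge\eta S(f,x,b)$ for $0\le\eta\le 1$. Your $R(\alpha,(x,q),b)=|\alpha(x-q)|$ ignores $b$, so it fails this already for $\eta=0$.

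This is not a formality. It is exactly what makes the family of functions $\mu\mapsto\sum_j\bigl(S(u,(x_j,q_j),b_j)^p-C^p\int R(\cdot,(x_j,q_j),b_j)^p\,d\mu\bigr)$ concave in Ky Fan's sense. Indeed, a convex combination $\lambda g_1+(1-\lambda)g_2$ of two such functions is dominated by the function obtained by concatenating the two lists, after replacing each $b_j$ by $\lambda^{1/p}b_j$, respectively $(1-\lambda)^{1/p}b_j$.

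The natural choice is $R(\alpha,(x,q),b)=|b|\,|\alpha(x-q)|$ and $S(u,(x,q),b)=|b|\,d_Y(u(x),u(q))$. But then (ii) is only the case $b_j=1$ of the $R$-$S$-abstract $p$-summing inequality, and what you actually need is the weighted inequality
\[
\sum_{j=1}^{n}\lambda_j\,d_Y(u(x_j),u(q_j))^p\le C^p\sup_{\alpha\in B_{E^{*}}}\sum_{j=1}^{n}\lambda_j\,|\alpha(x_j-q_j)|^p,\qquad \lambda_j\ge 0 .
\]
Because $u$ is nonlinear and $Y$ is only a metric space, you cannot absorb $\lambda_j$ into the points as one does for linear operators.

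Your direct Ky Fan sketch hits the same point and settles it circularly. ``Rescaling'' is not available for $d_Y(u(x),u(q))$, and Ky Fan's lemma presupposes concavity rather than providing it. The missing step is short:
\begin{enumerate}
\item For rational $\lambda_j=m_j/N$, apply (ii) to the list in which the pair $(x_j,q_j)$ is repeated $m_j$ times, then divide by $N$.
\item For real $\lambda_j\ge 0$, approximate by rationals. Both sides are Lipschitz in $(\lambda_j)$, because $|\alpha(x_j-q_j)|^p\le\|x_j-q_j\|^p$ on $B_{E^{*}}$.
\end{enumerate}
Equivalently, apply Ky Fan, or Hahn--Banach separation from the open cone $\{h\in C(B_{E^{*}}):\min h>0\}$, to the convex cone generated by your functions. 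Check $\min\le 0$ on rational positive combinations by repetition, and on real ones by uniform approximation.

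With this added, the abstract theorem (or your direct argument) yields (iii) with the same constant $C$, and the equality of the infima follows as you say. The paper only cites the abstract theorems, so this verification is exactly what a complete proof along either line must supply.
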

		
		\begin{proof}
			It follows directly from the definition that $(i)\Leftrightarrow(ii)$. And $(ii)\Leftrightarrow(iii)$  is a consequence of the abstract versions of Pietsch Domination Theorem \cite{unified PDT, general PDT}.
		\end{proof}

	It will be very convenient to have at our disposal a version of this result for the case where $E=H$ is a closed subspace of $C(K)$, for some compact Hausdorff topological space $K$.
	
	\begin{corollary} \label{coro importante}Let $K$ be a compact Hausdorff topological space and $H$ be a closed subspace of $C(K)$. For a Lipschitz mapping $u: H \to Y$ the following statements are equivalent:
		
		$(i)$ $u\in \Pi_{p}^{*}(H;Y)$;
		
		$(ii)$' for all $n \in \mathbb{N}$ and $T_{1},\dots,T_{n},S_{1},\dots,S_{n} \in H$, we have
		\begin{equation*}
			\left(\sum\limits_{j=1}^{n}  d_{Y}(u(T_{j}),u(S_{j}))^{p} \right)^{1/p} \leq \pi_{p}^{*}(u) \sup\limits_{\varphi \in K} \left(\sum\limits_{j=1}^{n} |T_{j}(\varphi) - S_{j}(\varphi)|^{p} \right)^{1/p};
		\end{equation*}
		
		$(iii)$' there is a regular Borel probability measure $\nu$ on $K$ such that, for all $T,S \in H$, we have
		\begin{equation*}
			d_{Y}(u(T),u(S)) \leq \pi_{p}^{*}(u) \left(\int\limits_{K} |T(\varphi)-S(\varphi)|^{p} \, d\nu(\varphi) \right)^{1/p}.
		\end{equation*}
	\end{corollary}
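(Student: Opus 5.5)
We will deduce the corollary from the preceding theorem with $E=H$, the only new ingredient being that the supremum over $B_{H^{*}}$ and the measure on $B_{H^{*}}$ can be replaced by the supremum over $K$ and a measure on $K$. The key tool is the evaluation map $\delta: K \to B_{H^{*}}$, $\delta(\varphi)(T):=T(\varphi)$. It is continuous when $B_{H^{*}}$ carries the weak$^{*}$ topology, and its image is a norming set for $H$.

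\emph{Step 1: $(i)\Rightarrow(ii)'$.} We first show that for all $z_{1},\dots,z_{n}\in H$
\begin{equation*}
\sup_{\alpha\in B_{H^{*}}}\Big(\sum_{j=1}^{n}|\alpha(z_{j})|^{p}\Big)^{1/p}=\sup_{\varphi\in K}\Big(\sum_{j=1}^{n}|z_{j}(\varphi)|^{p}\Big)^{1/p}.
\end{equation*}
The inequality ``$\geq$'' holds because $\delta(\varphi)\in B_{H^{*}}$. For ``$\leq$'', write the left side by duality as $\sup\{\|\sum_j \lambda_j z_j\|_{\infty}: \|\lambda\|_{p^{*}}\le1\}$. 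This uses that $\alpha\mapsto(\alpha(z_j))_j$ ranges over $\ell_p^n$ and $\sup_\alpha|\alpha(\sum\lambda_jz_j)|=\|\sum\lambda_jz_j\|$ by Hahn--Banach in $H$. Since the norm of $H$ is the sup norm on $K$, this equals $\sup_{\varphi}\sup_{\lambda}|\sum\lambda_j z_j(\varphi)|$, which is the right side.

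Applying this with $z_j=T_j-S_j$ and combining with the theorem's characterization $(i)\Leftrightarrow(ii)$ gives $(ii)'$ with constant $\pi_p^{*}(u)$. Here we use that the infimum of the constants in $(ii)$ is attained, which is standard by letting $C\downarrow\pi_p^{*}(u)$ in the inequality. Conversely, $(ii)'$ with the identity above gives $(i)$.

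\emph{Step 2: $(i)\Rightarrow(iii)'$.} We run the abstract Pietsch domination argument from \cite{unified PDT, general PDT} directly with the compact set $K$ in place of $B_{E^{*}}$. The relevant data are the seminorm-type map $R(\varphi,(T,S))=|T(\varphi)-S(\varphi)|$ and $\Sigma(u,(T,S))=d_Y(u(T),u(S))$. The function $R$ is continuous in $\varphi$ for fixed $(T,S)$, so the hypotheses of the abstract theorem hold. Inequality $(ii)'$ is precisely the abstract summing condition, and the abstract theorem yields a regular Borel probability measure $\nu$ on $K$ with constant $\pi_p^{*}(u)$. The abstract theorem is stated with the optimal constant, and its Hahn--Banach/Ky Fan argument produces a measure for the exact constant by weak$^{*}$ compactness of the probability measures on $K$.

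An alternative route starts from the measure $\tilde\nu$ on $B_{H^{*}}$ from the theorem. One would then represent each $\alpha\in B_{H^{*}}$ by a norm-preserving extension to $C(K)$, i.e.\ a measure on $K$, and push forward. This needs a measurable selection of extensions, so I would avoid it and use the direct abstract approach.

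\emph{Step 3: $(iii)'\Rightarrow(i)$.} Sum $d_Y(u(T_j),u(S_j))^{p}\le \pi_p^{*}(u)^p\int_K|T_j-S_j|^p\,d\nu$ over $j$. Then bound $\int_K\sum_j|T_j-S_j|^p\,d\nu$ by $\sup_{\varphi}\sum_j|T_j(\varphi)-S_j(\varphi)|^p$, using that $\nu$ is a probability measure. This gives $(ii)'$, and hence $(i)$ by Step 1.

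The main obstacle is Step 2, namely checking that the abstract domination theorem applies with $K$ replacing $B_{H^{*}}$ and that the constant comes out as exactly $\pi_p^{*}(u)$ rather than an arbitrary $C$. For this I rely on the identity of Step 1 and on the compactness argument giving attainment.
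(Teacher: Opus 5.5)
Your proposal is correct and follows the paper's route. The paper cites the identity between the supremum over $B_{H^{*}}$ and the supremum over $K$ from Botelho et al., whereas you prove it directly via $\ell_{p}$--$\ell_{p^{*}}$ duality and Hahn--Banach; both then apply the abstract Pietsch domination theorem with $K$ as the compact space, which gives the measure with the same constant $\pi_{p}^{*}(u)$ as in $(ii)'$, and you also check the reverse implications $(iii)'\Rightarrow(ii)'\Rightarrow(i)$ that the paper leaves implicit.
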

	
	To derive $(ii)$' from $(ii)$ it is enough to use that for all $n \in \mathbb{N}$ and $T_{1},\dots,T_{n} \in H$, we have
	\begin{equation*}\label{igualdade matadora}
		\sup\limits_{\alpha \in B_{H^{*}}} \sum\limits_{j=1}^{n} |\alpha(T_{j})-\alpha(S_{j})|^{p} = \sup\limits_{\beta \in B_{C(K)^{*}}} \sum\limits_{j=1}^{n} |\beta(T_{j})-\beta(S_{j})|^{p} = \sup\limits_{\varphi \in K} \sum\limits_{j=1}^{n} |T_{j}(\varphi)-S_{j}(\varphi)|^{p}
	\end{equation*}
	(see \cite[p. 280]{haar botelho}). Therefore, we can use again the abstract versions of the Pietsch Domination Theorem \cite{unified PDT, general PDT} to obtain $(iii)$'.
	
	Now we shall prove that our main result works as a criterion to determine when the normalized Haar measure is a Pietsch measure for Lipschitz $p$-$B_{H^{*}}$-summing operators, and this result can not be straightforwardly obtained from \cite[Theorem 3.1]{haar botelho}.
	
	\begin{theorem}\label{caso lip}
		Let $G$ be a compact Hausdorff topological group, $H$ be a closed translation invariant subspace of $C(G)$, $Y$ be a metric space, $1 \leq p < \infty$ and $u: H \to Y$ be a Lipschitz $p$-$B_{H^{*}}$-summing translation invariant map. Then the normalized Haar measure on $G$ is a Pietsch measure for $u$.
	\end{theorem}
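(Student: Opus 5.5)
Starting from Corollary \ref{coro importante}$(iii)'$ with $K=G$, I would fix a regular Borel probability measure $\nu$ on $G$ with
\begin{equation*}
d_{Y}(u(T),u(S)) \leq \pi_{p}^{*}(u) \left(\int\limits_{G} |T(\varphi)-S(\varphi)|^{p} \, d\nu(\varphi) \right)^{1/p}
\end{equation*}
for all $T,S\in H$. The aim is to replace $\nu$ by $\sigma_G$ using Theorem \ref{main theorem}, so each pair $(T,S)$ has to be encoded in a continuous function $D$ on $G$.

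Fix $T,S\in H$ with $d_Y(u(T),u(S))>0$; if the distance is $0$ there is nothing to prove. Put
\begin{equation*}
D(\varphi):=\frac{\pi_{p}^{*}(u)^{p}\,|T(\varphi)-S(\varphi)|^{p}}{d_{Y}(u(T),u(S))^{p}} .
\end{equation*}
This $D$ is continuous and nonnegative on $G$. For $\psi\in G$ the functions $T\circ I_{\psi}$ and $S\circ I_{\psi}$ lie in $H$, because $H$ is translation invariant. Since $u$ is translation invariant,
\begin{equation*}
d_Y(u(T),u(S))=d_Y(u(T\circ I_\psi),u(S\circ I_\psi)).
\end{equation*}
Applying the $\nu$-domination to $T\circ I_\psi$ and $S\circ I_\psi$ then gives
\begin{equation*}
d_Y(u(T),u(S))^p\le \pi_p^*(u)^p\int_G|T(\psi\varphi)-S(\psi\varphi)|^p\,d\nu(\varphi).
\end{equation*}
This is exactly $1\le\int_G D(\psi\varphi)\,d\nu(\varphi)$ for all $\psi\in G$, so $\nu$ is $D$-dominating.

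By Theorem \ref{main theorem}, $\sigma_G$ is also $D$-dominating. Taking $\psi=e$ gives $1\le \int_G D\,d\sigma_G$, which rearranges to
\begin{equation*}
d_Y(u(T),u(S))\le \pi_p^*(u)\left(\int_G|T(\varphi)-S(\varphi)|^p\,d\sigma_G(\varphi)\right)^{1/p}.
\end{equation*}
This is the Pietsch domination with measure $\sigma_G$ and the same constant $\pi_p^*(u)$, which proves the statement.

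The main obstacle is that $\nu$ must be a single measure, independent of $(T,S)$. Corollary \ref{coro importante}$(iii)'$ provides exactly this, and Theorem \ref{main theorem} is applied pair by pair while $\sigma_G$ stays fixed, so there is no uniformity problem. The remaining points are routine: translation invariance of $u$ and $H$ must be used in the form $u(T\circ I_\psi)$ with left translations, matching the $D(\psi\varphi)$ in the definition of $D$-dominating, and the degenerate case $d_Y(u(T),u(S))=0$ must be handled separately so that $D$ is well defined.
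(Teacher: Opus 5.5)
Your proposal is correct and follows the paper's proof essentially step for step. You use the same function $D$, the same translation-invariance argument showing that the Pietsch measure $\nu$ from Corollary~\ref{coro importante}$(iii)'$ is $D$-dominating, and then Theorem~\ref{main theorem}. The only cosmetic difference is that you set $\psi=e$ where the paper drops the translation using invariance of $\sigma_G$; the two are equivalent.
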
 
	
	\begin{proof}
		We fix any $(T,S) \in H \times H$ such that $d_{Y}(u(T),u(S))^{p} \neq 0$ and define $D: G \to [0,\infty)$ by
		\begin{equation*}
			D(\varphi) := \dfrac{\pi_{p}^{*}(u)^{p}|T(\varphi)-S(\varphi)|^{p}}{d_{Y}(u(T),u(S))^{p}}. 
		\end{equation*}
		Clearly $D$ is well defined and is continuous. Since $H$ is a translation invariant subset of $C(G)$ and $u$ is a translation invariant map, it follows that
		\begin{equation*}
			d_{Y}(u(T\circ I_{\psi}),u(S\circ I_{\psi})) = d_{Y}(u(T),u(S)) \neq 0,
		\end{equation*}
		for all $\psi \in G$. Since $u$ is Lipschitz $p$-$B_{H^{*}}$-summing, there is a regular Borel probability measure $\nu$ on $G$ such that
		\begin{equation*}
			d_{Y}(u(T\circ I_{\psi}),u(S\circ I_{\psi}))^{p} \leq \pi_{p}^{*}(u)^{p} \int\limits_{G} |T\circ I_{\psi}(\varphi)-S\circ I_{\psi}(\varphi)|^{p} \, d\nu(\varphi),
		\end{equation*}
		for all $\psi \in G$. Thus
	
		\begin{equation*}
			\begin{aligned}
				1 & \leq \int\limits_{G} \dfrac{\pi_{p}^{*}(u)^{p} |T\circ I_{\psi}(\varphi)-S\circ I_{\psi}(\varphi)|^{p}}{d_{Y}(u(T\circ I_{\psi}),u(S\circ I_{\psi}))^{p}} \, d\nu(\varphi)  \\
				& = \int\limits_{G} \dfrac{\pi_{p}^{*}(u)^{p}|T(\psi\varphi)-S(\psi\varphi)|^{p}}{d_{Y}(u(T),u(S))^{p}} \, d\nu(\varphi) = \int\limits_{G} D(\psi\varphi) \, d\nu(\varphi), 
			\end{aligned}
		\end{equation*}
		for all $\psi \in G$. Hence, $\nu$ is $D$-dominating. By Theorem \ref{main theorem}, $\sigma_{G}$ is also $D$-dominating. Then, for all $\psi \in G$, we have
		\begin{equation*}
			1 \leq \int\limits_{G} D(\psi\varphi) \, d\sigma_{G}(\varphi) = \int\limits_{G} D(\varphi) \, d\sigma_{G}(\varphi) = \int\limits_{G} \dfrac{\pi_{p}^{*}(u)^{p}|T(\varphi)-S(\varphi)|^{p}}{d_{Y}(u(T),u(S))^{p}} \, d\sigma_{G}(\varphi),
		\end{equation*}
		that is,
		\begin{equation}\label{conta lipschitz}
			d_{Y}(u(T),u(S)) \leq \pi_{p}^{*}(u) \left(\int\limits_{G} |T(\varphi)-S(\varphi)|^{p} \, d\sigma_{G}(\varphi) \right)^{1/p}.
		\end{equation}
		Of course the inequality \eqref{conta lipschitz} is also satisfied for all $(T,S) \in H \times H$ such that $d_{Y}(u(T),u(S)) = 0$. Consequently, $\sigma_{G}$ is a Pietsch measure for $u$.
	\end{proof}

\begin{remark}
	It is well known that if a linear operator defined on a closed subspace $H$ of $C(K)$, where $K$ is a compact Hausdorff space, is absolutely $p$-summing, then there exists a regular Borel probability measure that provides an integral domination of the form \eqref{op.ab.s1}. However, as pointed out in \cite{Far.Joh.}, such an implication does not hold in the nonlinear setting of Lipschitz $p$-summing operators. It is also worth noting that the geometric structure of $X^{\#}$ is still poorly understood and, in most situations, difficult to handle. So the relevance of Corollary \ref{coro importante} lies in the fact that it provides an analogous domination property for the Lipschitz $p$-summing operators when we use $B_{H^{*}}$ instead of $B_{H^{\#}}$. Moreover, since every Lipschitz $p$-$B_{H^{*}}$-summing operator is Lipschitz $p$-summing, we conjecture that Theorem \ref{caso lip} provides the best possible answer to the open problem mentioned in \cite{haar botelho}.
\end{remark}
	
	\section{ Other applications}\label{MA}
	
	Following what was done in the previous section, Theorem \ref{main theorem} can be easily invoked in order to prove that normalized Haar measure on $G$ is also a Pietsch measure for all classes of summing operators present in \cite{haar botelho}. It is also interesting to notice that we can use Theorem \ref{main theorem} to encompass other cases that are not covered in the general approach from \cite[Theorem 3.1]{haar botelho}.
	
	\subsection{Lipschitz $p$-semi-integral multilinear operators} 
	
	We begin this subsection introducing the notion Lipschitz $p$-semi-integral operators. Let $m \in \mathbb{N}$, $E_{1},\dots,E_{m}$ and $F$ be Banach spaces and $1 \leq p < \infty$. A continuous $m$-linear operator $u: E_{1} \times \dots \times E_{m} \to F$ is \textit{Lipschitz $p$-semi-integral} if there is a constant $C>0$ such that, for all $n \in \mathbb{N}$ and $(x_{1,j},\dots,x_{m,j}),(q_{1,j},\dots,q_{m,j}) \in E_{1} \times \dots \times E_{m}$, $j \in \{1,\dots,n\}$, we have
	\begin{equation}\label{lipschitz semi}
		\begin{aligned}
			& \left(\sum\limits_{j=1}^{n}\|u(x_{1,j},\dots,x_{m,j}) - u(q_{1,j},\dots,q_{m,j})\|_{F}^{p}\right)^{1/p}\leq \\
			& \leq C \sup\limits_{(\alpha_{1},\dots,\alpha_{m}) \in B_{E_{1}^{*}} \times \dots \times B_{E_{m}^{*}}} \left(\sum\limits_{j=1}^{n}|\alpha_{1}(x_{1,j})\dots\alpha_{m}(x_{m,j}) - \alpha_{1}(q_{1,j})\dots\alpha_{m}(q_{m,j})|^{p}\right)^{1/p}.
		\end{aligned}
	\end{equation}
	The infimum of all constants that satisfies \eqref{lipschitz semi} will be denoted by $\pi_{p,si}^{L}(u)$.
	
	When $K_{1},\dots,K_{m}$ are compact Hausdorff topological spaces and $H_{i}$ is a closed subspace of $C(K_{i})$, $i \in \{1,\dots,m\}$, then for all $n \in \mathbb{N}$ and $(T_{1,j},\dots,T_{m,j}),(S_{1,j},\dots,S_{m,j}) \in H_{1} \times \dots \times H_{m}$, $j \in \{1,\dots,n\}$, one has
	\begin{equation}\label{igualdade matadora multi lip}
		\begin{aligned} 
			&\sup\limits_{(\alpha_{1},\dots,\alpha_{m}) \in B_{H_{1}^{*}} \times \dots \times B_{H_{m}^{*}}} \sum\limits_{j=1}^{n}|\alpha_{1}(T_{1,j})\dots\alpha_{m}(T_{m,j}) - \alpha_{1}(S_{1,j})\dots\alpha_{m}(S_{m,j})|^{p} = \\
			&= \sup\limits_{(\varphi_{1},\dots,\varphi_{m}) \in K_{1} \times \dots \times K_{m}} \sum\limits_{j=1}^{n}|T_{1,j}(\varphi_{1})\dots T_{m,j}(\varphi_{m}) - S_{1,j}(\varphi_{1})\dots S_{m,j}(\varphi_{m})|^{p}.
		\end{aligned}
	\end{equation}
	The proof of this can be found in \cite[Theorem 2]{CD}.
	
	As a consequence of (\ref{igualdade matadora multi lip}) we obtain a version of the Pietsch Domination Theorem, whose proof follows from a simple application of \cite[Theorem 3.1]{general PDT} (see also \cite{unified PDT, adv}).
	
	\begin{theorem}\label{pdt semi}
		Let $m \in \mathbb{N}$, let $K_{1},\dots,K_{m}$ be compact Hausdorff topological spaces, $H_{i}$ be a closed subspace of $C(K_{i})$, $i \in \{1,\dots,m\}$, $F$ be a Banach space and $1 \leq p < \infty$. A continuous $m$-linear operator $u: H_{1} \times \dots \times H_{m} \to F$ is Lipschitz $p$-semi-integral if and only if there are a constant $C>0$ and a regular Borel probability measure $\nu$ on $K_{1} \times \dots \times K_{m}$ endowed with the product topology such that, for all $(T_{1},\dots,T_{m}),(S_{1},\dots,S_{m}) \in H_{1} \times \dots \times H_{m}$, we have
		\begin{equation}\label{domination semi}
			\begin{aligned}
				& \|u(T_{1},\dots,T_{m}) - u(S_{1},\dots,S_{m})\|_{F} \leq\\
				& \leq C \left(\int\limits_{K_{1} \times \dots \times K_{m}} |T_{1}(\varphi_{1})\dots T_{m}(\varphi_{m}) - S_{1}(\varphi_{1})\dots S_{m}(\varphi_{m})|^{p} \, d\nu(\varphi) \right)^{1/p}.
			\end{aligned}
		\end{equation}
	\end{theorem}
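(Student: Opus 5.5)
We obtain Theorem \ref{pdt semi} as an instance of the abstract Pietsch Domination Theorem \cite[Theorem 3.1]{general PDT}, with the sup over dual balls in \eqref{lipschitz semi} replaced by a sup over points via \eqref{igualdade matadora multi lip}.

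\emph{Necessity.} Assume $u$ is Lipschitz $p$-semi-integral with constant $C$. By \eqref{igualdade matadora multi lip}, for every finite family of pairs $\bigl((T_{1,j},\dots,T_{m,j}),(S_{1,j},\dots,S_{m,j})\bigr)_{j\le n}$ we get
\[
\sum_{j=1}^{n}\|u(T_{1,j},\dots,T_{m,j})-u(S_{1,j},\dots,S_{m,j})\|_F^p \le C^p \sup_{\varphi\in K} \sum_{j=1}^{n} |T_{1,j}(\varphi_1)\cdots T_{m,j}(\varphi_m)-S_{1,j}(\varphi_1)\cdots S_{m,j}(\varphi_m)|^p ,
\]
where $K:=K_1\times\dots\times K_m$ is compact Hausdorff. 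We then apply the abstract theorem with:
\begin{itemize}
\item the set of ``points'' $X=(H_1\times\dots\times H_m)^2$;
\item $R(\varphi,(T,S)) = |T_1(\varphi_1)\cdots T_m(\varphi_m)-S_1(\varphi_1)\cdots S_m(\varphi_m)|$;
\item $S(u,(T,S))=\|u(T)-u(S)\|_F$.
\end{itemize}
The only hypothesis to check is that each map $\varphi\mapsto R(\varphi,(T,S))$ is continuous on $K$. This holds because each $T_i$ and $S_i$ is continuous and products and differences of continuous functions are continuous. The abstract theorem then yields a regular Borel probability measure $\nu$ on $K$ satisfying \eqref{domination semi}.

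The same conclusion can be reached directly, which is how I would justify it if citing feels too quick. For each finite family as above, define on the convex cone of such functions in $C(K)$ the function
\[
f(\varphi)=\sum_{j=1}^{n}\Bigl(\|u(T_j)-u(S_j)\|^p - C^p R(\varphi,(T_j,S_j))^p\Bigr).
\]
Each such $f$ satisfies $\min_K f\le 0$. Ky Fan's minimax lemma, or Hahn--Banach separation from the open positive cone, then produces a single probability measure $\nu$ with $\int f\,d\nu\le 0$ for all such $f$. Taking $n=1$ gives \eqref{domination semi}.

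\emph{Sufficiency.} Given \eqref{domination semi}, raise to the $p$-th power and sum over $j$. Bound the sum of integrals by $\nu(K)=1$ times the supremum over $\varphi$ of the summed integrand, which gives
\[
\sum_{j=1}^{n}\|u(T_{1,j},\dots,T_{m,j})-u(S_{1,j},\dots,S_{m,j})\|_F^p \le C^p \sup_{\varphi\in K} \sum_{j=1}^{n} |T_{1,j}(\varphi_1)\cdots T_{m,j}(\varphi_m)-S_{1,j}(\varphi_1)\cdots S_{m,j}(\varphi_m)|^p .
\]
Using \eqref{igualdade matadora multi lip} in the other direction converts the right-hand side back into the supremum over $B_{H_1^*}\times\dots\times B_{H_m^*}$, which is exactly \eqref{lipschitz semi}.

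The main obstacle is the identity \eqref{igualdade matadora multi lip} itself, which lets us pass from functionals to points. It is taken from \cite[Theorem 2]{CD}, so the remaining work is the routine verification of the hypotheses of the abstract theorem.
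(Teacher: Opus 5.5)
Your proposal follows the paper's proof. You use the identity \eqref{igualdade matadora multi lip} from \cite[Theorem 2]{CD} to replace the supremum over $B_{H_1^*}\times\dots\times B_{H_m^*}$ by a supremum over $K_1\times\dots\times K_m$, then apply \cite[Theorem 3.1]{general PDT} with your $R$ and $S$; your direct verification of sufficiency is also correct. The one thing to tighten is your optional self-contained Ky Fan/Hahn--Banach sketch. The unit-weight functions $f$ do not form a convex cone on their own, so before separating you need the usual repetition-plus-rational-approximation step to show that convex combinations still satisfy $\min_K \sum_k \lambda_k f_k \le 0$. This is exactly what the cited abstract theorem handles.
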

The infimum of all constants that satisfies \eqref{domination semi} is precisely $\pi_{p,si}^{L}(u)$. A measure $\nu$ on $G_{1} \times \dots \times G_{m}$ that satisfies \eqref{domination semi} is also called Pietsch measure for $u$.
	
	\begin{theorem}
		Let $m \in \mathbb{N}$, $G_{1},\dots,G_{m}$ be compact Hausdorff topological groups, $H_{i}$ be a closed translation invariant subspace of $C(G_{i})$, $i \in \{1,\dots,m\}$, $F$ be a Banach space,  $1 \leq p < \infty$ and $u: H_{1} \times \dots \times H_{m} \to F$ be a translation $m$-invariant Lipschitz $p$-semi-integral operator. Then the normalized Haar measure on $G_{1} \times \dots \times G_{m}$ is a Pietsch measure for $u$.
	\end{theorem}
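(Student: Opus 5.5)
The plan is to mimic the proof of Theorem \ref{caso lip}, this time working on the product group $G:=G_{1}\times\dots\times G_{m}$. Recall that $G$ is a compact Hausdorff topological group, with translations $I_{\psi}$ acting coordinatewise. By Theorem \ref{pdt semi}, applied with $K_{i}=G_{i}$, there is a regular Borel probability measure $\nu$ on $G$ such that \eqref{domination semi} holds with $C=\pi_{p,si}^{L}(u)$. (If the infimum is not attained, I would take $C=\pi_{p,si}^{L}(u)+\varepsilon$ and let $\varepsilon\to0$ at the end.)

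Fix $(T_{1},\dots,T_{m}),(S_{1},\dots,S_{m})\in H_{1}\times\dots\times H_{m}$ with $\|u(T_{1},\dots,T_{m})-u(S_{1},\dots,S_{m})\|_{F}\neq0$. Define $D:G\to[0,\infty)$ by
\begin{equation*}
D(\varphi):=\frac{C^{p}\,|T_{1}(\varphi_{1})\cdots T_{m}(\varphi_{m})-S_{1}(\varphi_{1})\cdots S_{m}(\varphi_{m})|^{p}}{\|u(T_{1},\dots,T_{m})-u(S_{1},\dots,S_{m})\|_{F}^{p}},
\end{equation*}
which is continuous on $G$ with the product topology. For $\psi=(\psi_{1},\dots,\psi_{m})\in G$, translation invariance of each $H_{i}$ gives $T_{i}\circ I_{\psi_{i}},S_{i}\circ I_{\psi_{i}}\in H_{i}$. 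Translation $m$-invariance of $u$ shows that the left-hand side of \eqref{domination semi} is unchanged when these translated tuples are used. Applying \eqref{domination semi} to them and dividing, we get
\begin{equation*}
1\leq\int\limits_{G}D(\psi\varphi)\,d\nu(\varphi),\quad\text{for all }\psi\in G,
\end{equation*}
since $T_{i}\circ I_{\psi_{i}}(\varphi_{i})=T_{i}(\psi_{i}\varphi_{i})$ and $\psi\varphi=(\psi_{1}\varphi_{1},\dots,\psi_{m}\varphi_{m})$. Thus $\nu$ is $D$-dominating.

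Theorem \ref{main theorem} then gives that $\sigma_{G}$ is $D$-dominating. Taking $\psi=e$ and clearing the denominator yields \eqref{domination semi} with $\nu$ replaced by $\sigma_{G}$ and the same constant $C$. The pairs with zero distance satisfy the inequality trivially.

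It remains to identify $\sigma_{G}$ with the normalized Haar measure of the product. This is immediate, because $\sigma_{G}$ is by definition the normalized Haar measure of the group $G$. If one also wants $\sigma_{G}=\sigma_{G_{1}}\times\dots\times\sigma_{G_{m}}$, one checks that the regular version of the product measure is translation invariant and has total mass one, and then invokes uniqueness.

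I expect the main point requiring care to be the setup: $G$ must be treated as a single compact group, so that Theorem \ref{main theorem} applies directly, and the coordinatewise translation has to match the $m$-invariance hypothesis. The rest is routine.
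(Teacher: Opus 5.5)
Your proposal is correct and follows essentially the same route as the paper. You fix a pair with nonzero distance, build the continuous $D$ on $G=G_{1}\times\dots\times G_{m}$, use the Pietsch measure from Theorem \ref{pdt semi} together with translation $m$-invariance to show that measure is $D$-dominating, and then apply Theorem \ref{main theorem}. Your $\varepsilon$-hedge is harmless because $\sigma_{G}$ does not depend on $\varepsilon$, and your final constant $\pi_{p,si}^{L}(u)$ is the correct one (the paper's last display carries a stray exponent $p$).
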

	
	\begin{proof}
		Let $G = G_{1} \times \dots \times G_{m}$ and we fix $(T_{1},\dots,T_{m}),(S_{1},\dots,S_{m}) \in H_{1} \times \dots \times H_{m}$ such that $\|u(T_{1},\dots,T_{m}) - u(S_{1},\dots,S_{m})\|_{F} \neq 0$. Define $D: G \to [0,\infty)$ given by
		\begin{equation*}
			D(\varphi) = \dfrac{\pi_{p,si}^{L}(u)^{p}|T_{1}(\varphi_{1})\dots T_{m}(\varphi_{m}) - S_{1}(\varphi_{1})\dots S_{m}(\varphi_{m})|^{p}}{\|u(T_{1},\dots,T_{m}) - u(S_{1},\dots,S_{m})\|_{F}^{p}}, \hbox{ for all $\varphi = (\varphi_{1},\dots,\varphi_{m}) \in G$}.
		\end{equation*}
		Clearly $D$ is well defined and continuous. Since $H_{i}$ is a translation invariant subset of $C(G_{i})$, for each $i\in \{1,\dots,m\}$, and $u$ is a translation $m$-invariant, we obtain
		\begin{equation*}
			\|u(T_{1} \circ I_{\psi_{1}}, \dots, T_{m} \circ I_{\psi_{m}}) - u(S_{1} \circ I_{\psi_{1}}, \dots, S_{m} \circ I_{\psi_{m}})\|_{F} = \|u(T_{1},\dots,T_{m}) - u(S_{1},\dots,S_{m})\|_{F} \neq 0.
		\end{equation*}
		By Theorem \ref{pdt semi}, there exist a regular Borel probability measure $\nu$ on $G$ such that for all $\psi=(\psi_{1},\dots,\psi_{m}) \in G$, we have
		\begin{equation*}
			\begin{aligned}
				& \|u(T_{1} \circ I_{\psi_{1}},\dots,T_{m} \circ I_{\psi_{m}}) - u(S_{1} \circ I_{\psi_{1}},\dots,S_{m} \circ I_{\psi_{m}})\|_{F}^{p} \leq\\
				& \leq \pi_{p,si}^{L}(u)^{p} \int\limits_{G} |T_{1}\circ I_{\psi_{1}}(\varphi_{1})\dots T_{m}\circ I_{\psi_{m}}(\varphi_{m}) - S_{1}\circ I_{\psi_{1}}(\varphi_{1})\dots S_{m}\circ I_{\psi_{m}}(\varphi_{m})|^{p} \, d\nu(\varphi).
			\end{aligned}
		\end{equation*}
		It follows that
		\begin{equation*}
			\begin{aligned}
				1 & \leq \int\limits_{G} \dfrac{\pi_{p,si}^{L}(u)^{p} |T_{1}\circ I_{\psi_{1}}(\varphi_{1})\dots T_{m}\circ I_{\psi_{m}}(\varphi_{m}) - S_{1}\circ I_{\psi_{1}}(\varphi_{1})\dots S_{m}\circ I_{\psi_{m}}(\varphi_{m})|^{p}}{\|u(T_{1} \circ I_{\psi_{1}},\dots,T_{m} \circ I_{\psi_{m}}) - u(S_{1} \circ I_{\psi_{1}},\dots,S_{m} \circ I_{\psi_{m}})\|_{F}^{p}} \, d\nu(\varphi) \\
				& = \int\limits_{G} \dfrac{\pi_{p,si}^{L}(u)^{p}|T_{1}(\psi_{1}\varphi_{1})\dots T_{m}(\psi_{m}\varphi_{m}) - S_{1}(\psi_{1}\varphi_{1})\dots S_{m}(\psi_{m}\varphi_{m})|^{p}}{\|u(T_{1},\dots,T_{m}) - u(S_{1},\dots,S_{m})\|_{F}^{p}} \, d\nu(\varphi) = \int\limits_{G} D(\psi\varphi) \, d\nu(\varphi).
			\end{aligned}
		\end{equation*}
		Thus $\nu$ is $D$-dominating. By Theorem \ref{main theorem}, $\sigma_{G}$ is $D$-dominating. Hence, for all $\psi \in G$, we have
		\begin{equation*}
			\begin{aligned}
				1 & \leq \int\limits_{G} D(\psi\varphi) \, d\sigma_{G}(\varphi) = \int\limits_{G} D(\varphi) \, d\sigma_{G}(\varphi) \\
				& = \int\limits_{G} \dfrac{\pi_{p,si}^{L}(u)^{p}|T_{1}(\varphi_{1})\dots T_{m}(\varphi_{m}) - S_{1}(\varphi_{1})\dots S_{m}(\varphi_{m})|^{p}}{\|u(T_{1},\dots,T_{m}) - u(S_{1},\dots,S_{m})\|_{F}^{p}} \, d\sigma_{G}(\varphi),
			\end{aligned}
		\end{equation*}
		then
		\begin{equation*}\label{conta lipsemi}
			\begin{aligned}
				& \|u(T_{1},\dots,T_{m}) - u(S_{1},\dots,S_{m})\|_{F}\leq \\
				& \leq \pi_{p,si}(u)^{p} \left(\int\limits_{G} |T_{1}(\varphi_{1})\dots T_{m}(\varphi_{m}) - S_{1}(\varphi_{1})\dots S_{m}(\varphi_{m})|^{p} \, d\sigma_{G}(\varphi) \right)^{1/p}.
			\end{aligned}
		\end{equation*}
		Therefore we conclude that $\sigma_{G}$ is a Pietsch measure for $u$.
	\end{proof}

	\subsection{$\Phi$-summing multilinear operators, where $\Phi$ is an Orlicz function}
	Let $(\Omega,\Sigma,\mu)$ be a $\sigma$-finite measure space and $\Phi: [0,+\infty) \to [0,+\infty)$ be an \textit{Orlicz function} (that is, $\Phi$ is a convex, increasing and continuous function, with $\Phi(0) = 0$). The \textit{Orlicz space} $L_{\Phi}(\mu)$ on $(\Omega,\Sigma,\mu)$ is defined to be the space of all (equivalence classes of) functions $\mu$-measurable $f: \Omega \to \mathbb{R}$ such that
	\begin{equation*}
		\int\limits_{\Omega} \Phi(\lambda|f(x)|) \, d\mu(x) < \infty,
	\end{equation*}
	for some $\lambda>0$, endowed with the \textit{Luxemburg norm}
	\begin{equation*}
		\|f\|_{L_{\Phi}(\mu)} := \inf \left\{\varepsilon>0: \int\limits_{\Omega} \Phi\left(\dfrac{|f(x)|}{\varepsilon}\right) \, d\mu(x) \leq 1\right\}.
	\end{equation*}
	A useful and simple observation is that if $f\in L_{\Phi}(\mu)$ satisfies $\int\limits_{\Omega} \Phi\left(|f(x)| / \lambda\right) \, d\mu(x) \geq 1$, then $\lambda\leq \|f\|_{L_{\Phi}(\mu)}$.
	
	Consider now $n \in \mathbb{N}$ and let $\nu = (\nu_{j})_{j=1}^{n} \in S_{\ell_{1}^{n}}$ be a positive sequence. We will denote by $\ell_{\Phi}^{n}(\nu)$ the $n$-dimensional Orlicz sequence space on $(\{1,\dots,n\},2^{\{1,\dots,n\}},\mu_{n})$, where
	\begin{equation*}
		\mu_{n}(\{j\}):= \nu_{j}, \hbox{ for all $j \in \{1,\dots,n\}$},
	\end{equation*}
	endowed with the norm
	\begin{equation*}
		\|(x_{j})_{j=1}^{n}\|_{\ell_{\Phi}^{n}(\nu)} := \inf\left\{\varepsilon > 0: \sum\limits_{j=1}^{n} \Phi\left(\frac{|x_{j}|}{\varepsilon}\right) \nu_{j} \leq 1 \right\}.
	\end{equation*}  
	
	Let $m \in \mathbb{N}$, $E_{1},\dots,E_{m}$ and $F$ be Banach spaces and $\Phi: [0,\infty) \to [0,\infty)$ be an Orlicz function. According to Mastylo and Sanchez \cite{mastylo}, a continuous $m$-linear operator $u: E_{1} \times \dots \times E_{m} \to F$ is \textit{$\Phi$-summing} if there is a constant $C>0$ such that, for all $n \in \mathbb{N}$, for all $\nu = (\nu_{j})_{j=1}^{n} \in S_{\ell_{1}^{n}}$ and $(x_{1,j},\dots,x_{m,j}) \in E_{1} \times \cdots \times E_{m}$, $j \in \{1,\dots,n\}$, we have
	\begin{equation}\label{Phi}
		\begin{aligned}
			& \|(\|u(x_{1,j},\dots,x_{m,j})\|_{F})_{j=1}^{n}\|_{\ell_{\Phi}^{n}(\nu)} \\
			& \leq C \sup\limits_{(\alpha_{1},\dots,\alpha_{m}) \in B_{E_{1}^{*}} \times \dots \times B_{E_{m}^{*}}} \|(\alpha_{1}(x_{1,j}) \dots \alpha_{m}(x_{m,j}))_{j=1}^{n}\|_{\ell_{\Phi}^{n}(\nu)}.    
		\end{aligned} 
	\end{equation}
	The infimum of all constants $C>0$ satisfying \eqref{Phi} is denoted by $\pi_{\Phi}(u)$.
	
	\begin{theorem}\label{domination thm 1}$($\cite[Theorem 3.1]{mastylo}$)$
		Let $m \in \mathbb{N}$, $K_{1},\dots,K_{m}$ be compact Hausdorff topological spaces, $H_{i}$ be a closed subspace of $C(K_{i})$, $i \in \{1,\dots,m\}$, $F$ be a Banach space and $\Phi: [0,\infty) \to [0,\infty)$ be a normalized Orlicz function. If $u: H_{1} \times \dots \times H_{m} \to F$ is a $\Phi$-summing operator, then there are a constant $C>0$ and a regular Borel probability measure $\nu$ on $K_{1} \times \dots \times K_{m}$ endowed with the product topology such that, for all $(T_{1},\dots,T_{m}) \in H_{1} \times \dots \times H_{m}$, we have
		\begin{equation}\label{varphi-summing}
			\|u(T_{1},\dots,T_{m})\|_{F} \leq C \|\odot(T_{1},\dots,T_{m})\|_{L_{\Phi}(\nu)},
		\end{equation}
		where $\odot(T_{1},\dots,T_{m}): K_{1} \times \dots \times K_{m} \to \mathbb{R}$ is the continuous map given by
		\begin{equation*}
			\odot(T_{1},\dots,T_{m})(\varphi_{1},\dots,\varphi_{m}):= T_{1}(\varphi_{1})\dots T_{m}(\varphi_{m}), \hbox{ for all $(\varphi_{1},\dots,\varphi_{m}) \in K_{1} \times \dots \times K_{m}$}.
		\end{equation*}
	\end{theorem}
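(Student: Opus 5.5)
The plan is to derive this Pietsch domination theorem for $\Phi$-summing multilinear operators from the abstract Pietsch Domination Theorem \cite[Theorem 3.1]{general PDT} (see also \cite{unified PDT, adv}). The steps, in order, are: first reduce the supremum over the dual balls to a supremum over $K_1\times\dots\times K_m$; then recast the $\Phi$-summing inequality in a form with a ``sum over $j$ equals one'' normalization; then run a Ky Fan minimax argument on the probability measures on $K=K_1\times\dots\times K_m$.

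\emph{Step 1 (reduction to point evaluations).} I will show that for every $n$, every positive $\nu\in S_{\ell_1^n}$ and every $T_{i,j}\in H_i$,
\begin{equation*}
\sup_{(\alpha_i)\in \prod B_{H_i^{*}}} \|(\alpha_1(T_{1,j})\cdots\alpha_m(T_{m,j}))_j\|_{\ell_\Phi^n(\nu)} = \sup_{\varphi\in K}\|(T_{1,j}(\varphi_1)\cdots T_{m,j}(\varphi_m))_j\|_{\ell_\Phi^n(\nu)}.
\end{equation*}
The inequality ``$\geq$'' is clear, since point evaluations restricted to $H_i$ lie in $B_{H_i^*}$. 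For ``$\leq$'', I extend each $\alpha_i$ by Hahn--Banach to a measure of variation at most one on $K_i$. The map $(\alpha_1,\dots,\alpha_m)\mapsto$ norm is separately convex in each $\alpha_i$, since the $\ell^n_\Phi(\nu)$ norm composed with a map that is linear in $\alpha_i$ is convex. By Krein--Milman, the supremum over each ball $B_{C(K_i)^*}$ is therefore attained at extreme points $\pm\delta_{\varphi_i}$ (or $\lambda\delta_{\varphi_i}$ with $|\lambda|=1$), and the unimodular scalars do not change absolute values. This is the analogue of \eqref{igualdade matadora multi lip} and \cite[Theorem 2]{CD}.

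\emph{Step 2 (rewriting the inequality).} I will use $\pi_\Phi(u)$-homogeneity and the Luxemburg norm to turn the $\Phi$-summing inequality into a modular one. Take $C>\pi_\Phi(u)$ and tuples with $\|u(T_{\cdot,j})\|\ne 0$. Applying the inequality with $\nu$ concentrated suitably (normalized $\Phi$, so that $\Phi(1)=1$) gives, for each finite family and positive weights $\nu_j$ summing to one,
\begin{equation*}
\sum_j \nu_j\,\Phi\!\left(\frac{|T_{1,j}(\varphi_1)\cdots T_{m,j}(\varphi_m)|}{C^{-1}\|u(T_{\cdot,j})\|}\right)\geq 1
\end{equation*}
for some $\varphi\in K$. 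This is exactly the hypothesis of the abstract theorem with $R(\varphi,(T_i))=|\odot(T_1,\dots,T_m)(\varphi)|$ and $S(u,(T_i))=\|u(T_i)\|$, but with $t\mapsto t^p$ replaced by $\Phi$. I will justify it by contradiction: if the modular were $<1$ for all $\varphi$, then by compactness and continuity it would be $\le 1-\delta$ uniformly. Scaling down slightly would then make the right-hand Luxemburg norm strictly smaller than $C^{-1}\sup_j$-type quantities, contradicting the inequality after choosing $\nu_j$ appropriately.

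\emph{Step 3 (minimax).} I will run the standard Ky Fan argument on the convex weak$^*$ compact set $P(K)$ of probability measures. For each finite family, the function $\mu\mapsto \sum_j\nu_j\int_K\Phi(\cdots)\,d\mu-1$ is affine, continuous and has nonnegative maximum, because a Dirac measure at the $\varphi$ from Step 2 works. This family of functions is concave-like, since convex combinations of families correspond to concatenating families with reweighted $\nu_j$. Ky Fan's lemma then yields a single $\mu\in P(K)$ with $\int_K\Phi\big(C|\odot(T_i)|/\|u(T_i)\|\big)d\mu\geq 1$ for all $(T_i)$. By the observation following the definition of $L_\Phi(\mu)$, this gives $\|u(T_i)\|\le C\|\odot(T_i)\|_{L_\Phi(\mu)}$.

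The main obstacle is Step 2: extracting the pointwise modular inequality from a norm inequality in $\ell_\Phi^n(\nu)$. The norm is not additive, so the freedom to choose the weights $\nu$ (rather than just repeat entries) must be used carefully. This is where normalization of $\Phi$ and its convexity ($\Phi(t/\lambda)\ge\Phi(t)/\lambda$ for $\lambda\le1$) will be invoked.
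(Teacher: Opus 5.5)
The paper does not prove this statement; it imports it from \cite[Theorem 3.1]{mastylo}, so your plan has to stand on its own. Steps 1 and 3 hold up. The separate-convexity/Krein--Milman reduction to point evaluations is correct. The Ky Fan argument works because a convex combination $tf_1+(1-t)f_2$ is again one of your functions: concatenate the two families and reweight by $t\nu^{(1)}$, $(1-t)\nu^{(2)}$. After that, the observation following the definition of $L_\Phi(\mu)$ turns $\int_K\Phi\bigl(C|\odot(T_1,\dots,T_m)|/\|u(T_1,\dots,T_m)\|_F\bigr)\,d\mu\ge 1$ into the Luxemburg-norm domination.

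The gap is Step 2, which you flag yourself, and the mechanism you sketch for it does not work. Write $a_j=\|u(T_{1,j},\dots,T_{m,j})\|_F$ and $b_j=|\odot(T_{1,j},\dots,T_{m,j})|$. You need, for the \emph{given} weights $\nu_j$ (Step 3 uses exactly these), a point $\varphi_0$ with $\sum_j\nu_j\Phi\bigl(Cb_j(\varphi_0)/a_j\bigr)\ge 1$. Applying the $\Phi$-summing inequality to the original tuples only compares $\|(a_j)_j\|_{\ell_\Phi^n(\nu)}$ with $\sup_\varphi\|(b_j(\varphi))_j\|_{\ell_\Phi^n(\nu)}$. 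The quotients $b_j/a_j$ never appear inside $\Phi$, and changing or concentrating the weights does not put them there, because weights act outside $\Phi$. Reweighting can emulate that rescaling when $\Phi(t)=t^p$, but not for a general, non-homogeneous $\Phi$. So your contradiction argument via ``scaling down slightly'' and ``choosing $\nu_j$ appropriately'' does not close.

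The missing idea is to rescale the vectors using multilinearity: apply the inequality to the tuples $(T_{1,j}/a_j,T_{2,j},\dots,T_{m,j})$.
\begin{itemize}
\item The left-hand side becomes $\|(1,\dots,1)\|_{\ell_\Phi^n(\nu)}=\inf\{\varepsilon>0:\Phi(1/\varepsilon)\le 1\}=1$. This uses $\Phi(1)=1$ together with convexity, which gives $\Phi(t)\ge t>1$ for $t>1$; this is exactly where normalization enters.
\item By Step 1, the right-hand side is $C\max_{\varphi\in K}\|(b_j(\varphi)/a_j)_j\|_{\ell_\Phi^n(\nu)}$, and the maximum is attained at some $\varphi_0$ by continuity and compactness.
\item Hence $\|(Cb_j(\varphi_0)/a_j)_j\|_{\ell_\Phi^n(\nu)}\ge 1$. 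This forces $\sum_j\nu_j\Phi\bigl(Cb_j(\varphi_0)/a_j\bigr)\ge 1$: if that modular value were $<1$, continuity of $\Phi$ would admit some $\varepsilon<1$ in the Luxemburg infimum.
\end{itemize}
With this in place of your Step 2, the rest of your argument goes through.
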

		
	The least constant $C$ for which the inequality \eqref{varphi-summing} holds is precisely $\pi_{\Phi}(u)$. A measure $\nu$ on $K_{1} \times \dots \times K_{m}$ that satisfies \eqref{varphi-summing} is also called Pietsch measure for $u$.

	\begin{theorem}\label{resultado para os phi somantes}
		Let $m \in \mathbb{N}$, $G_{1},\dots,G_{m}$ be compact Hausdorff topological groups, $H_{i}$ be a closed translation invariant subspace of $C(G_{i})$, $i \in \{1,\dots,m\}$, $F$ be a Banach space, $\Phi: [0,\infty) \to [0,\infty)$ be a normalized Orlicz function and $u: H_{1} \times \dots \times H_{m} \to F$ be a translation $m$-invariant $\Phi$-summing operator. Then the normalized Haar measure on $G_{1} \times \dots \times G_{m}$ is a Pietsch measure for $u$.
	\end{theorem}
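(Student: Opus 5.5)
The plan is to mimic the proofs of Theorem \ref{caso lip} and of the Lipschitz semi-integral case, with the Luxemburg norm handled through the observation stated just before this theorem. Put $G=G_{1}\times\dots\times G_{m}$. Fix $(T_{1},\dots,T_{m})\in H_{1}\times\dots\times H_{m}$ with $\lambda:=\|u(T_{1},\dots,T_{m})\|_{F}\neq 0$; if $\lambda=0$ there is nothing to prove. Since the measure and constant in Theorem \ref{domination thm 1} do not depend on the chosen tuple, I may take $\pi_{\Phi}(u)>0$.

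Define $D:G\to[0,\infty)$ by
\begin{equation*}
D(\varphi):=\Phi\left(\frac{\pi_{\Phi}(u)\,|T_{1}(\varphi_{1})\cdots T_{m}(\varphi_{m})|}{\lambda}\right).
\end{equation*}
This function is continuous because $\Phi$ and each $T_{i}$ are continuous.

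First I check that the measure $\nu$ from Theorem \ref{domination thm 1} (with $K_{i}=G_{i}$ and constant $C=\pi_{\Phi}(u)$) is $D$-dominating. Take $\psi=(\psi_{1},\dots,\psi_{m})\in G$. Translation invariance of the $H_{i}$ gives $T_{i}\circ I_{\psi_{i}}\in H_{i}$, and translation $m$-invariance of $u$ gives $\|u(T_{1}\circ I_{\psi_{1}},\dots,T_{m}\circ I_{\psi_{m}})\|_{F}=\lambda$. Applying \eqref{varphi-summing} to the translated tuple yields
\begin{equation*}
\frac{\lambda}{\pi_{\Phi}(u)}\leq \|\odot(T_{1}\circ I_{\psi_{1}},\dots,T_{m}\circ I_{\psi_{m}})\|_{L_{\Phi}(\nu)}.
\end{equation*}

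The key step is to turn this norm inequality into the integral inequality $\int_{G}D(\psi\varphi)\,d\nu(\varphi)\geq 1$. This is the converse of the observation stated before the theorem, and it is where I expect the only real care to be needed. Suppose the integral were $<1$. Since $\Phi$ is continuous and the integrand is bounded (because $G$ is compact), dominated convergence gives some $\varepsilon<\lambda/\pi_{\Phi}(u)$ with
\begin{equation*}
\int_{G}\Phi\bigl(|f(\varphi)|/\varepsilon\bigr)\,d\nu(\varphi)\leq 1,
\end{equation*}
where $f=\odot(T_{1}\circ I_{\psi_{1}},\dots,T_{m}\circ I_{\psi_{m}})$. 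Then $\|f\|_{L_{\Phi}(\nu)}\leq\varepsilon<\lambda/\pi_{\Phi}(u)$, which is a contradiction. Because $f(\varphi)=T_{1}(\psi_{1}\varphi_{1})\cdots T_{m}(\psi_{m}\varphi_{m})$, the integral in question is exactly $\int_{G}D(\psi\varphi)\,d\nu(\varphi)$. So $\nu$ is $D$-dominating.

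Theorem \ref{main theorem} then shows that $\sigma_{G}$ is $D$-dominating. Taking $\psi=e$ gives
\begin{equation*}
\int_{G}\Phi\left(\frac{|\odot(T_{1},\dots,T_{m})(\varphi)|}{\lambda/\pi_{\Phi}(u)}\right)d\sigma_{G}(\varphi)\geq 1.
\end{equation*}
The observation preceding the theorem then gives $\lambda/\pi_{\Phi}(u)\leq\|\odot(T_{1},\dots,T_{m})\|_{L_{\Phi}(\sigma_{G})}$, that is,
\begin{equation*}
\|u(T_{1},\dots,T_{m})\|_{F}\leq\pi_{\Phi}(u)\,\|\odot(T_{1},\dots,T_{m})\|_{L_{\Phi}(\sigma_{G})}.
\end{equation*}
Hence $\sigma_{G}$ is a Pietsch measure for $u$.
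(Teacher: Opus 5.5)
Your proposal is correct and follows essentially the same route as the paper. It uses the same function $D$ and shows, via translation $m$-invariance, that the measure from Theorem~\ref{domination thm 1} is $D$-dominating; it then applies Theorem~\ref{main theorem} and the Luxemburg-norm observation. The main difference is that you justify the passage from $\lambda/\pi_{\Phi}(u)\leq\|f\|_{L_{\Phi}(\nu)}$ to $\int_{G}D(\psi\varphi)\,d\nu(\varphi)\geq 1$ by a continuity and dominated convergence argument, a step the paper simply asserts, and you take $\psi=e$ at the end rather than invoking Haar invariance.
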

	
	\begin{proof}
		Let $G = G_{1} \times \dots \times G_{m}$. We fix $(T_{1},\dots,T_{m}) \in H_{1} \times \dots \times H_{m}$ such that $\|u(T_{1},\dots,T_{m})\|_{F} \neq 0$ and define $D: G \to [0,\infty)$ by
		\begin{equation}\label{definition D}
			D(\varphi) :=  \Phi\left(\dfrac{\pi_{\Phi}(u)|T_{1}(\varphi_{1})\dots T_{m}(\varphi_{m})|}{\|u(T_{1},\dots,T_{m})\|_{F}}\right), \hbox{ for all $\varphi=(\varphi_{1},\dots,\varphi_{m}) \in G$}.
		\end{equation}
		Clearly $D$ is well defined and is continuous. Since the operator $u$ and $H_{1},...,H_{m} $ are translation invariant, for all $\psi=(\psi_{1},\dots,\psi_{m}) \in G$, we have
		\begin{equation}\label{conta 1}
			\|u(T_{1} \circ I_{\psi_{1}}, \dots, T_{m} \circ I_{\psi_{m}})\|_{F} = \|u(T_{1},\dots,T_{m})\|_{F} \neq 0.
		\end{equation}
		By Theorem \ref{domination thm 1}, there is a regular Borel probability measure $\nu$ on $G$ such that, for all $\psi=(\psi_{1},\dots,\psi_{m}) \in G$,
		\begin{equation*}
			1  \leq \int\limits_{G}  \Phi\left(\dfrac{\pi_{\Phi}(u)|T_{1} \circ I_{\psi_{1}}(\varphi_{1})\dots T_{m} \circ I_{\psi_{m}}(\varphi_{m})|}{\|u(T_{1} \circ I_{\psi_{1}},\dots,T_{m} \circ I_{\psi_{m}})\|_{F}}\right) \, d\nu(\varphi).
		\end{equation*}

		It follows from \eqref{definition D} and \eqref{conta 1} that
		\begin{equation*}
			\begin{aligned}
				1 & \leq \int\limits_{G}  \Phi\left(\dfrac{\pi_{\Phi}(u)|T_{1}(\psi_{1}\varphi_{1})\dots T_{m}(\psi_{m}\varphi_{m})|}{\|u(T_{1},\dots,T_{m})\|_{F}}\right) \, d\nu(\varphi) = \int\limits_{G} D(\psi\varphi) \, d\nu(\varphi).
			\end{aligned}
		\end{equation*}
		Thus $\nu$ is $D$-dominating. By Theorem \ref{main theorem}, $\sigma_{G}$ is $D$-dominating. Hence, for all $\psi=(\psi_{1},\dots,\psi_{m}) \in G$, we have
		\begin{equation*}
			1 \leq \int\limits_{G} D(\psi\varphi) \, d\sigma_{G}(\varphi) = \int\limits_{G} D(\varphi) \, d\sigma_{G}(\varphi) = \int\limits_{G}  \Phi\left(\dfrac{\pi_{\Phi}(u)|T_{1}(\varphi_{1})\dots T_{m}(\varphi_{m})|}{\|u(T_{1},\dots,T_{m})\|_{F}}\right) \, d\sigma_{G}(\varphi).
		\end{equation*}
		This implies
		\begin{equation*}\label{conta Phi}
			\|u(T_{1},\dots,T_{m})\|_{F} \leq \pi_{\Phi}(u) \|\odot(T_{1},\dots,T_{m})\|_{L_{\Phi}(\sigma_{G})}.
		\end{equation*}
	Consequently, $\sigma_{G}$ is a Pietsch measure for $u$.
	\end{proof}
	
	In this fashion, Theorem \ref{resultado para os phi somantes} recovers the corresponding result for this class of mappings \cite[Corollary 5.2]{mastylo}.


\begin{thebibliography}{99}  
		
		%\bibitem{alencar matos} R. Alencar and M.C. Matos, \textit{Some classes of multilinear mappings between Banach spaces}, Publ. Dep. Anál. Mat. Univ. Complutense Madrid, Sect. 1, no. 12, 1989.
		
		%\bibitem{ash} R. Ash, \textit{Measure, Integration and Functional Analysis}, Academic Press, Inc., 1972.
		
		%\bibitem{bartle} R. Bartle, \textit{The Elements of Integration and Lebesgue Measure}, Eastern Michigan University and University of Illinois, John Wiley \& Sons, Inc, New York', 1995.
		
		\bibitem{bogachev v1} V. I. Bogachev, \textit{Measure Theory Volume 1}, Springer-Verlag, 2007. 
		
		\bibitem{bogachev v2} V. I. Bogachev, \textit{Measure Theory Volume 2}, Springer-Verlag, 2007. 
		
		\bibitem{BC} G. Botelho and J. R. Campos, 
		\textit{On the transformation of vector-valued sequences by linear and multilinear operators},
		Monatsh. Math. \textbf{183} (2017), no. 3, 415--435.
		
		\bibitem {BMPS}G. Botelho, M. Maia, D. Pellegrino, and J. Santos, \textit{A
			unified factorization theorem for Lipschitz summing operators}, Q. J. Math.,
		\textbf{70} (2019), no. 4, 1521--1533.
		
		\bibitem{unified PDT} G. Botelho, D. Pellegrino and P. Rueda, \textit{A unified Pietsch domination theorem}, J. Math. Anal. Appl. \textbf{365} (2010), 269--276.
		
		\bibitem{haar botelho} G. Botelho, D. Pellegrino, P. Rueda, J. Santos and J. B. Seoane-Sepúlveda, \textit{When is the Haar measure a Pietsch measure for nonlinear mappings?}, Studia Mathematica \textbf{213} (2012), 275--287.
		
		%\bibitem{livro botelho} G. Botelho, D. Pellegrino, E. Teixeira, \textit{Fundamentos de Análise Funcional}, Sociedade Brasileira de Matemática, 2015.
		
		\bibitem {CD}E. \c{C}al\i\c{s}kan and D. Pellegrino, \textit{On the
			multilinear generalizations of the concept of absolutely summing operators},
		Rocky Mountain J. Math., \textbf{37} (2007), 1137--1154.
		
		\bibitem{ca} {\small H. Cartan, \textit{Sur la mesure de Haar}, C.
			R. Acad. Sci. Paris \textbf{211} (1940), 759--762.%
		}
		
		\bibitem{ChenZeheng}  D. Chen and B. Zheng, \textit{Lipschitz $p$-integral operators and Lipschitz $p$-nuclear operators}, Nonlinear Anal. \textbf{75} (2012), 5270--5282. 
		
		%		\bibitem{CalPel} E. \c Caliskan and  D. Pellegrino, \textit{On the multilinear generalizations of the concept of absolutely summing operators}, Rocky Mountain Journal of Mathematics \textbf{37} (2007), 1137–1154.
		
		\bibitem{diestel} J. Diestel, H. Jarchow and A. Tonge, \textit{Absolutely Summing Operators}, Cambridge Stud. Adv. Math. 43, Cambridge Univ. Press., Cambridge, 1995.
		
		\bibitem{Far.Joh.} J. Farmer and W.B. Johnson, \textit{Lipschitz $p$-summing operators}, Proc. Amer. Math. Soc. \textbf{137} (2009), 2989–2995.
		
		\bibitem{folland} G. Folland, \textit{Real Analysis: Modern Techniques and Their Applications}, 2nd Edition, Wiley, 1999.
		
		\bibitem{A.Haar} A. Haar, \textit{Der Massbegriff in der Theorie der kontinuierlichen Gruppen}, Ann. of Mathematics \textbf{34} (1933), 147--169.
		
		\bibitem{LSS} A. C. B. Lima, J. Santos and E. B. Silva, \textit{Splitting Property for Summing Operators}, Bull. Braz. Math. Soc. \textbf{57} (2026), no. 17, 1--16
		
		\bibitem{israel}R. Macedo, D. Pellegrino and J. Santos, \textit{Nonlinear variants of a theorem of Kwapie\'{n}},
		Israel J. Math. \textbf{247} (2022), no. 1, 217--231.
		
		\bibitem{mastylo} M. Mastylo and E. A. Sánchez Pérez, \textit{Ideals of multilinear mappings via Orlicz spaces and translation invariant operators}, Math. Nachr. \textbf{294} (2020), 1--21.
		
		\bibitem{mastyloperez} M. Mastylo and E. A. Sánchez Pérez, \textit{Lipschitz $(q,p)$-summing maps from $C(K)$-spaces to metric spaces}, J. Geom. Anal.  \textbf{33} (2023), no. 4, Paper No. 113, 24 pp.
		
		\bibitem{Neu} {\small J. von Neumann, \textit{Zum Haarschen mass in
				topologischen Gruppen}, Compositio Math. \textbf{1} (1935), 106--114. }
		
		\bibitem{general PDT} D. Pellegrino and J. Santos, \textit{A general Pietsch domination theorem}, J. Math. Anal. Appl. 375 (2011), 371--374.
		
		\bibitem{tunisia} D. Pellegrino and J. Santos, \textit{On summability of nonlinear operators},
		Tunis. J. Math. \textbf{6} (2024), no. 1, 115--135.
		
		\bibitem {adv} D. Pellegrino, J. Santos and J. B. Seoane-Sep\'{u}lveda, \textit{Some techniques on nonlinear analysis and applications}, Advances in Mathematics \textbf{229} (2012), 1235--1265.
		
		\bibitem {stu} A. Pietsch, \textit {Absolut $p$-summierende Abbildungen in normieten R\"{a}umen}, Studia Mathematica \textbf{27} (1967), 333--353.
		
		%\bibitem{Liv.Lip.}  N. Weaver, \textit{Lipschitz Algebras}, 2nd Edition, World Scientific, 2018.
		
		\bibitem{weil} A. Weil, \textit{L'intégration dans les groupes topologiques et ses applications}, 10th Edition, Actualités Scientifiques et Industrielles, 1940.
	\end{thebibliography}
\end{document}